\theoremstyle{plain}
\newtheorem{theorem}{Theorem}
\newtheorem{proposition}[theorem]{Proposition}
\newtheorem{conjecture}[theorem]{Conjecture}
\newtheorem*{theorem*}{Theorem}
\newtheorem*{lemma*}{Lemma}
\def\N{\mathbb{N}}
\def\Alphabet{\mathscr{A}}
\def\C{\mathcal}
\let\eps\varepsilon
\let\originalleft\left
\let\originalright\right
\renewcommand{\left}{\mathopen{}\mathclose\bgroup\originalleft}
\renewcommand{\right}{\aftergroup\egroup\originalright}
\newcommand*\bigcdot{\mathpalette\bigcdot@{.6}}
\newcommand*\bigcdot@[2]{\mathbin{\vcenter{\hbox{\scalebox{#2}{$\m@th#1\bullet$}}}}}
\def\imod#1{\allowbreak\mkern10mu({\operator@font mod}\,\,#1)}
\begin{document}

\title{Product-free sets in the free semigroup}

\author{Imre Leader}
\address{Department of Pure Mathematics and Mathematical Statistics, University of Cambridge, Wilberforce Road, Cambridge CB3\thinspace0WB, UK}
\email{i.leader@dpmms.cam.ac.uk}

\author{Shoham Letzter}
\address{ETH Institute for Theoretical Studies, 8092 Zurich, Switzerland}
\email{shoham.letzter@math.ethz.ch}

\author{Bhargav Narayanan}
\address{Department of Mathematics,	Rutgers University, Piscataway NJ 08854, USA}
\email{narayanan@math.rutgers.edu}

\author{Mark Walters}
\address{School of Mathematical Sciences, Queen Mary, University of London, London E1\thinspace4NS, UK}
\email{m.walters@qmul.ac.uk}

\date{6 December 2018}
\subjclass[2010]{Primary 20M05; Secondary 05D05}

\begin{abstract}
	In this paper, we study product-free subsets of the free semigroup over a finite alphabet $\Alphabet$. We prove that the maximum density of a product-free subset of the free semigroup over $\Alphabet$, with respect to the natural measure that assigns a weight of $|\Alphabet|^{-n}$ to each word of length $n$, is precisely $1/2$.
\end{abstract}

\maketitle
\section {Introduction}
A subset $S$ of a semigroup is said to be \emph{product-free} if there do not exist $x,y,z \in S$ (not necessarily distinct) such that $x\bigcdot y = z$; it is customary to call $S$ \emph{sum-free} when the underlying semigroup is abelian.

It is a well known fact (and an easy exercise) that any sum-free subset of the integers has upper density at most $1/2$. Sum-free subsets of the integers, and of abelian groups in general, have been studied by very many researchers over the last fifty years. For example, from the work of Green and Ruzsa~\citep{green}, there is now a complete picture of how large a sum-free set we can find in any finite abelian group. We refer the reader to the surveys of Tao and Vu~\citep{tao-s} and Kedlaya~\citep{ked-s} for more information on these questions.

Product-free subsets of finite non-abelian groups were first investigated by Babai and S\'os~\citep{babai}. Following foundational work by Gowers~\citep{gowers} demonstrating so-called `product-mixing' phenomena in groups with no low-dimensional representations, there has been a great deal of recent work in the non-abelian setting; for instance, in a recent breakthrough, Eberhard~\citep{eber} determined how large a product-free subset of the alternating group can be.

In light of these developments, it is natural to ask what one can say about product-free sets in infinite non-abelian structures, a setting in which our knowledge is a bit more limited. Perhaps the first natural place to look among infinite non-abelian structures is among those that are free, so here, we shall investigate how large product-free subsets of the free semigroup can be.

\section{Our results}
Let $\Alphabet$ be a finite set. We write $\C{F} = \C{F}_\Alphabet$ for the free semigroup over $\Alphabet$; in other words, $\C{F}$ is the set of all finite words over the alphabet $\Alphabet$ equipped with the associative operation of concatenation. While we state and prove our results for finite alphabets of all possible sizes for the sake of completeness, the reader will lose nothing by supposing that $\Alphabet$ is a two-element set in what follows; indeed, this case captures all the difficulties inherent in the questions we study.

Recall that a set $S \subset \C{F}$ is \emph{product-free} if, writing $\bigcdot$ for the operation of concatenation, there do not exist words $x,y,z \in S$ (not necessarily distinct) such that $x\bigcdot y = z$. There is an obvious example of a `large' subset of $\C{F}$ that is product-free: when $\Alphabet = \{ a, b\}$ for instance, the set of words which contain an odd number of occurrences of the symbol $a$ (or $b$, for that matter) is easily seen to be a product-free set that contains, roughly, half the words from $\C{F}$.
Our aim in this paper is to prove that these sets are, in a precise sense, the largest product-free subsets of $\C{F}$. We remark in passing that there are several other product-free sets that are `equally large': for any nonempty subset $\Gamma \subset \Alphabet$, the \emph{odd-occurrence set} $\C{O}_\Gamma \subset \C{F}$ generated by $\Gamma$, namely the set of words in which the total number of occurrences of symbols from $\Gamma$ is odd, is easily seen to be a product-free set; in the case where $\Alphabet = \{ a, b\}$, our earlier example corresponds to taking $\Gamma = \{a\}$, and taking $\Gamma = \{a, b\}$ gives us the set of all words of odd length, for example.

To formally state our results, we need a way to measure the size of a set $S\subset \C{F}$. For an integer $n\in \N$, the \emph{layer} $\C{F}(n) \subset \C{F}$ is the set of words of length $n$, and the \emph{ball} $\C{F}_{\le}(n) \subset \C{F}$ is the set of words of length at most $n$. As a first attempt, one might define the density of a set $S \subset \C{F}$ via its densities in balls, namely as the quantity
\[\limsup_{n \to \infty}\frac{|S \cap \C{F}_{\le}(n)|}{|\C{F}_{\le}(n)|}.\]
However, a little thought should convince the reader that the counting measure is somewhat ill-suited for our purposes. Indeed, when $|\Alphabet| > 1$, almost all the words in $\C{F}_{\le}(n)$ are long since $|\C{F}(n)| \ge |\C{F}_{\le}(n)|/2$. Consequently, we may find product-free sets that are intuitively small, and yet have density arbitrarily close to $1$ in the above sense; for example, for any sufficiently large $c \in \N$, the set
\[\bigcup_{n\ge c}(\C{F}_{\le 2^n + c} \setminus \C{F}_{\le 2^n})\]
is product-free and has density at least $1-1/c$ in the above sense, provided $|\Alphabet| > 1$.

A more natural approach is to assign a weight of $|\Alphabet|^{-n}$ to each word of $\C{F}(n)$, thereby ensuring that the layers $\C{F}(n)$ have the same total weight for all $n\in\N$. To this end, for a subset $S \subset \C{F}$ and an integer $n \in \N$, we define \emph{the density of $S$ in the layer $\C{F}(n)$} by $d_S(n) = |S \cap \C{F}(n)|/|\C{F}(n)|$. With this definition in place, most standard notions of density may now be carried over: we define the \emph{upper asymptotic density} of $S$ by
\[ \bar d(S) = \limsup_{n \to \infty} \frac{ \sum_{i=1}^{n}d_S(i)}{n},\]
and the \emph{upper Banach density} of $S$ by
\[ d^*(S) = \limsup_{n-m \to \infty} \frac{ \sum_{i=m}^{n}d_S(i)}{n-m+1}.\]
Of course, the latter is a weaker notion of density than the former; indeed, it is clear that $\bar d(S) \le d^*(S)$ for any $S \subset \C{F}$.

It is easy to see that any odd-occurrence set has both an upper asymptotic density and an upper Banach density of $1/2$. Our aim in this note is to show that product-free sets cannot be any larger; our main result is as follows.
\begin{theorem}\label{main-res}
	Let $\Alphabet$ be a finite set. If $S \subset \C{F}_\Alphabet$ is product-free, then $d^*(S) \le 1/2$.
\end{theorem}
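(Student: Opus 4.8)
The plan is to work throughout with the layer densities $\alpha_\ell = |S \cap \C{F}(\ell)| / |\C{F}(\ell)|$, which is exactly the probability that a uniformly random word of length $\ell$ lies in $S$; the theorem then asks us to bound the average of the $\alpha_\ell$ over a window of lengths $[m,n]$, in the limit $n - m \to \infty$, by $1/2$. First I would encode the product-free condition geometrically. Fix a large $N$ and a uniformly random word $w = \omega_1 \cdots \omega_N$, and for cut points $0 \le p < q \le N$ declare $p \to q$ precisely when the factor $\omega_{p+1}\cdots\omega_q$ lies in $S$; then $\P(p \to q) = \alpha_{q-p}$, and the number of edges of a given length $d$ is the number of length-$d$ factors of $w$ lying in $S$, with expectation $(N-d+1)\alpha_d$. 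The crucial point is that, since $\omega_{p+1}\cdots\omega_r$ concatenated with $\omega_{r+1}\cdots\omega_q$ gives $\omega_{p+1}\cdots\omega_q$, product-freeness says exactly that this ordered graph on $\{0,1,\dots,N\}$ contains no transitive triangle: there is no $p < r < q$ with $p \to r$, $r \to q$ and $p \to q$ all present.

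The model for the extremal behaviour is the parity graph attached to a two-colouring $s \colon \{0,\dots,N\} \to \{0,1\}$, in which $p \to q$ iff $s_p \ne s_q$; this is transitive-triangle-free, and it is exactly what the odd-occurrence sets produce, with $s_t$ the running $\Gamma$-parity of $\omega_1 \cdots \omega_t$, whose induced layer densities converge to $1/2$. My aim would be to prove a structural (stability) statement to the effect that a transitive-triangle-free ordered graph is, in the windowed-average sense we need, no denser than such a parity graph, so that on taking expectations over $w$ and letting $N \to \infty$ the windowed average of the $\alpha_d$ is pushed to at most $1/2$. Here I would treat separately the ``thick'' windows, where $n \ge 2m$ and two in-window lengths can sum to a third in-window length so that the triangle condition bites directly, and the ``thin'' windows $n < 2m$, where no such triple exists within the window and one must descend to a coarser scale (grouping letters into blocks, which exhibits $S$ restricted to lengths divisible by a fixed $\ell$ as a product-free set over the alphabet $\Alphabet^\ell$).

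The honest difficulty, and where I expect to spend all the effort, is that every soft inequality is too weak to reach $1/2$. The sharpest pairwise estimate, obtained by cutting a random word into an independent length-$i$ prefix and length-$j$ suffix, is $\alpha_{i+j} \le 1 - \alpha_i \alpha_j$; for a constant density this only gives the golden ratio $(\sqrt 5 - 1)/2 \approx 0.618$, and the extremal odd-occurrence example does not even come close to saturating it, which is a clear sign that pairwise information is the wrong tool. Likewise, merely counting edges --- a transitive-triangle-free ordered graph on $N+1$ vertices has at most about $N^2/4$ edges --- only yields a windowed bound of $1/\sqrt 2 \approx 0.707$, because that count is weighted towards short lengths and a single fixed length (indeed any single layer, since $\C{F}(\ell)$ is itself product-free) may be entirely contained in $S$.

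Consequently the essential content is to extract a genuinely uniform, per-layer bound of exactly $1/2$ from the transitive-triangle-free condition, and this appears to be an inherently multi-scale phenomenon: no individual scale is constrained, and the bound of $1/2$ emerges only from the interaction of all scales at once (for instance, two disjoint ``thin'' windows cannot both be nearly full, since a short factor from the lower window and a factor from the higher window multiply into the higher window). The main obstacle is thus to make this cross-scale tension quantitative --- to show that sustaining a windowed density above $1/2$ on arbitrarily long windows forces, somewhere, a forbidden transitive triangle --- and it is precisely this step, rather than any of the reductions above, that carries the full strength of the theorem.
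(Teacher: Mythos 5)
Your proposal is not a proof: it is a (correct) reformulation plus an accurate diagnosis of where the difficulty lies, and it stops exactly at the point where the proof has to begin. You say yourself that the essential step is ``to make this cross-scale tension quantitative --- to show that sustaining a windowed density above $1/2$ on arbitrarily long windows forces, somewhere, a forbidden transitive triangle,'' and that this step ``carries the full strength of the theorem''; no mechanism for that step is ever given. Worse, the structural target you propose --- that a transitive-triangle-free ordered graph on $\{0,\dots,N\}$ is no denser, in windowed average, than a parity graph --- is false at the level of a single $N$, even in expectation: take $p \to q$ iff $q - p > N/2$, which is the graph produced by the genuine product-free set consisting of all words of length in $(N/2,\, N]$. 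It has no transitive triangle (two edge lengths exceeding $N/2$ cannot sum to at most $N$), yet its windowed density is $1$ on the window $(N/2, N]$, whose length $N/2$ tends to infinity with $N$. So any correct argument must use the fact that a \emph{fixed} set $S$ defines these graphs coherently at \emph{all} scales $N$ simultaneously; your blocking reduction for thin windows does not achieve this, since grouping letters into blocks of length $\ell$ sends a thin window $[m,n]$, $n < 2m$, to the window $[m/\ell, n/\ell]$, which is still thin. And for thick windows you concede that the pairwise inequality $\alpha_{a+b} \le 1 - \alpha_a\alpha_b$ only yields the golden ratio, so neither case of your case split is actually closed.

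For contrast, the paper's quantitative mechanism is a prefix decomposition rather than a graph statement. For scales $\ell_1 < \dots < \ell_k < n$, let $S(n; \ell_1, \dots, \ell_k)$ be the set of words in $S(n)$ having no prefix in $S(\ell_1) \cup \dots \cup S(\ell_k)$, with density $d(n; \ell_1, \dots, \ell_k)$. A double count (the products $S(\ell_i; \ell_1,\dots,\ell_{i-1}) \bigcdot S(n-\ell_i)$ and the set $S(n)$ are pairwise disjoint, and all lie inside the disjoint union of the slabs $S(\ell_i; \ell_1,\dots,\ell_{i-1}) \bigcdot \C{F}(n-\ell_i)$ together with $S(n;\ell_1,\dots,\ell_k)$) gives, for product-free $S$, that $\sum_i d(\ell_i; \ell_1,\dots,\ell_{i-1})\, d(n-\ell_i) + d(n)$ is at most $\sum_i d(\ell_i; \ell_1,\dots,\ell_{i-1}) + d(n; \ell_1,\dots,\ell_k) \le 1$. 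This one inequality is precisely the cross-scale tension you were looking for: the same prefix densities appear multiplied against in-window densities on the left and summing to at most $1$ on the right. Assuming $d^*(S) > 1/2 + \eps$, the paper bootstraps: either a long dense window contains a valid next scale $\ell_{k+1}$ pushing the prefix mass $d(\ell_1) + d(\ell_2;\ell_1) + \dots$ up to $1 - 2^{-(k+1)}$, or summing the inequality over that window forces its average density below $1/2$, a contradiction; and once the prefix mass is close to $1$, summing the inequality over any long window bounds its average density by about $2^k/(2^{k+1}-1) < 1/2 + \eps$, the final contradiction. That inductive construction of scales, absent from your proposal, is the proof.
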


\begin{comment}
We shall also show the only maximally dense product-free sets are subsets of odd-occurrence sets.
\begin{theorem}\label{unique}
	Let $\Alphabet$ be a finite set. If $S \subset \C{F}_\Alphabet$ is product-free and $d^*(S) = 1/2$, then $S \subset \C{O}_\Gamma$ for some nonempty subset $\Gamma \subset \Alphabet$.
\end{theorem}
\end{comment}
Let us mention that product-free sets in cancellative semigroups have been studied by {\L}uczak and Schoen~\citep{semigr}; while their results are sharp for such semigroups in general, these results do not give us any effective bounds on the size of a product-free subset of $\C{F}$.

Before we turn to the proof of Theorem~\ref{main-res}, it is worth pointing out that there is a simple argument that allows us to bound the upper asymptotic density of a product-free subset of $\C{F}$ away from $1$. Indeed, suppose that $S\subset\C{F}$ is product-free. We then have
\[d_S(m)d_S(n) + d_S(m+n) \le 1\]
for any $m,n\in\N$ since the sets $S \cap \C{F}(m+n)$ and $(S \cap \C{F}(m))\bigcdot (S \cap \C{F}(n))$ must be disjoint. Now, consider the set of integers $n \in \N$ for which $d_S(n) > \phi$, where $\phi = (\sqrt 5 - 1)/2 \approx 0.618$ is the unique positive solution to the equation $x^2 + x = 1$. It follows from the inequality above that this set of integers must be sum-free. It is now easy to see that $\bar d(S) \le (1 + \phi)/2 \approx 0.809$.

We shall have to work somewhat harder to prove Theorem~\ref{main-res}, which improves this bound of $(1 + \phi)/2$ for the upper asymptotic density to the optimal bound of $1/2$ for the upper Banach density. The proof of Theorem~\ref{main-res} is given in Section~\ref{sec-proof}. We conclude this note with a discussion of some open problems in Section~\ref{sec-conc}.

\section{Proof of the main result}\label{sec-proof}

We begin by fixing our finite alphabet $\Alphabet$. In the sequel, $\C{F}$ will always mean $\C{F}_\Alphabet$, the free semigroup over this fixed alphabet $\Alphabet$.

It will be helpful to establish some notation. For a pair of words $x, w \in \C{F}$, we say that $x$ is a \emph{prefix} of $w$ if $w = x\bigcdot y$ for some $y \in \C{F}$, and that $x$ is a \emph{suffix} of $w$ if $w = y\bigcdot x$ for some $y \in \C{F}$. For a pair of sets $S_1, S_2 \subset \C{F}$, we write $S_1 \bigcdot S_2$ for  their (Minkowski) product; in other words,
\[S_1 \bigcdot S_2 = \{ w_1 \bigcdot w_2 : w_1 \in S_1, w_2 \in S_2\}.\]

For a set $S\subset \C{F}$ and an integer $n \in \N$, we set $S(n) = S \cap \C{F}(n)$. One of the key ideas in the proof of Theorem~\ref{main-res} is the following definition. For any sequence of positive integers $ \ell_1 < \ell_2 <\dots <\ell_k <n$, we define
\[ S(n; \ell_1, \ell_2,\dots,\ell_k) = \left\{ w \in S(n):  w \text{ has no prefix in } S(\ell_1) \cup S(\ell_2) \cup \dots \cup  S(\ell_k) \right \}; \]
in other words,
\[ S(n; \ell_1, \ell_2,\dots,\ell_k) = S(n) \setminus \left( \bigcup_{i=1}^{k}S(\ell_i) \bigcdot \C{F}(n-\ell_i) \right).
\]
Let us note, for any $S \subset \C{F}$, that the sets $S(n;m)$ and $S(m) \bigcdot \C{F}(n-m)$ are disjoint for any pair of positive integers $m < n$. Recall that $d_S(n) = |S(n)||\C{F}(n)|^{-1}$; we analogously define
\[d_S(n; \ell_1, \ell_2,\dots,\ell_k) = \frac{|S(n; \ell_1, \ell_2,\dots,\ell_k)|}{|\C{F}(n)|}.\]

When the set $S$ in question is clear, we write $d(n)$ and $d(n; \ell_1, \ell_2,\dots,\ell_k)$ for $d_S(n)$ and $d_S(n; \ell_1, \ell_2,\dots,\ell_k)$, respectively. Recall that for any product-free set $S \subset \C{F}$ and any $m,n \in \N$, we have
\[d(m)d(n) + d(m+n) \le 1.\]
We start by proving a generalisation of this fact.
\begin{proposition}\label{doublecount}
	If $S \subset \C{F}$ is product-free, then for any sequence of positive integers $\ell_1 < \ell_2 <\dots <\ell_k <n$, we have
	\begin{align*}
		         & d(\ell_1)d(n-\ell_1) + d(\ell_2;\ell_1)d(n-\ell_2) + \dots + d(\ell_k; \ell_1,\ell_2,\dots,\ell_{k-1})d(n-\ell_k) + d(n)      \\
		\le \,\, & d(\ell_1) + d(\ell_2;\ell_1) + \dots + d(\ell_k; \ell_1,\ell_2,\dots,\ell_{k-1}) + d(n; \ell_1, \ell_2, \dots, \ell_k) \le 1.
	\end{align*}
\end{proposition}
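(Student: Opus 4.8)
The plan is to exhibit the left-hand sums as the $\C{F}(n)$-densities of a single family of pairwise disjoint subsets of $\C{F}(n)$, and then to use the product-free hypothesis to control how much of this family meets $S$. For $1 \le i \le k$, write $A_i = S(\ell_i; \ell_1, \dots, \ell_{i-1}) \bigcdot \C{F}(n - \ell_i)$, where for $i=1$ the list of forbidden prefixes is empty so that $A_1 = S(\ell_1) \bigcdot \C{F}(n - \ell_1)$; set also $B = S(n; \ell_1, \dots, \ell_k)$. Since every word of $\C{F}(n)$ has a unique prefix of each length below $n$, concatenation gives a bijection between $S(\ell_i; \ell_1, \dots, \ell_{i-1}) \times \C{F}(n - \ell_i)$ and $A_i$, so the density of $A_i$ in $\C{F}(n)$ is exactly $|S(\ell_i; \ell_1, \dots, \ell_{i-1})|/|\C{F}(\ell_i)| = d(\ell_i; \ell_1, \dots, \ell_{i-1})$, the $i$-th term of the middle and right-hand sums.

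First I would check that $A_1, \dots, A_k, B$ are pairwise disjoint. A word $v$ lies in $A_i$ precisely when its length-$\ell_i$ prefix lies in $S(\ell_i; \ell_1, \dots, \ell_{i-1})$; for such a $v$ that prefix has no prefix in $S(\ell_1) \cup \dots \cup S(\ell_{i-1})$, whereas a word in $A_j$ with $j < i$ has its length-$\ell_j$ prefix in $S(\ell_j)$, and this is a prefix of $v$ too, so $A_i \cap A_j = \emptyset$; and membership in $B$ forbids any prefix in $S(\ell_1) \cup \dots \cup S(\ell_k)$, which rules out membership in every $A_i$. Disjointness immediately yields the second inequality, since the densities sum to at most the total density $1$ of $\C{F}(n)$. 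Running the same ``first index at which a prefix appears'' argument shows moreover that $\bigcup_i A_i = \bigcup_i S(\ell_i) \bigcdot \C{F}(n - \ell_i)$, so that by the definition of $B$ the set $S(n) \setminus B$ is precisely $S(n) \cap \bigcup_i A_i$, a disjoint union of the sets $S(n) \cap A_i$.

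For the first inequality it then suffices to bound each $|S(n) \cap A_i|$, and this is where product-freeness enters. Every $v \in S(n) \cap A_i$ factors uniquely as $v = w \bigcdot y$ with $w \in S(\ell_i; \ell_1, \dots, \ell_{i-1}) \subset S$ and $y \in \C{F}(n - \ell_i)$; if $y$ were also in $S$ then $w, y, v$ would violate the product-free condition, so $y \notin S(n - \ell_i)$. Hence for each of the $|S(\ell_i; \ell_1, \dots, \ell_{i-1})|$ admissible prefixes $w$ there are at most $|\C{F}(n - \ell_i)|(1 - d(n - \ell_i))$ choices of suffix, giving $|S(n) \cap A_i| \le d(\ell_i; \ell_1, \dots, \ell_{i-1})\,|\C{F}(n)|\,(1 - d(n - \ell_i))$. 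Dividing by $|\C{F}(n)|$, summing over $i$, using that the left side sums to $d(n) - d(n; \ell_1, \dots, \ell_k)$, and rearranging then produces exactly the first inequality.

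I expect the only real subtlety to be the disjoint refinement in the second paragraph: the overlapping sets $S(\ell_i) \bigcdot \C{F}(n - \ell_i)$ do not have additive densities, and it is precisely the passage to the ``first-occurrence'' pieces $A_i$ --- replacing $S(\ell_i)$ by $S(\ell_i; \ell_1, \dots, \ell_{i-1})$ --- that makes the densities telescope correctly and lets the product-free bound be applied on each piece without any double counting.
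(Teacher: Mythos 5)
Your proof is correct and is essentially the paper's own double-counting argument: your sets $A_i$ and $B$ are exactly the paper's disjoint decomposition $R' \cup S(n;\ell_1,\dots,\ell_k)$, and your use of product-freeness to rule out suffixes lying in $S$ is precisely the paper's observation that $S(n)$ is disjoint from the products $S(\ell_i;\ell_1,\dots,\ell_{i-1})\bigcdot S(n-\ell_i)$. The only difference is bookkeeping --- the paper packages the first inequality as a set inclusion $L \subset R$ between two disjoint unions, while you bound each $|S(n)\cap A_i|$ separately and rearrange --- but it is the same count.
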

\begin{proof}
	First, consider the products
	\[ S(\ell_1) \bigcdot S(n-\ell_1), S(\ell_2;\ell_1)\bigcdot S(n-\ell_2), \dots, S(\ell_k; \ell_1,\ell_2,\dots,\ell_{k-1}) \bigcdot S(n-\ell_k).\]
	These subsets of $\C{F}(n)$ are by definition disjoint. Let $L'$ be the union of these $k$ sets. Since $S$ is product-free, $L'$ and $S(n)$ are disjoint as well. Let $L = L' \cup S(n)$; clearly, the density of $L$ in $\C{F}(n)$ is
	\[d(\ell_1)d(n-\ell_1) + d(\ell_2;\ell_1)d(n-\ell_2) + \dots + d(\ell_k; \ell_1,\ell_2,\dots,\ell_{k-1})d(n-\ell_k) + d(n).
	\]

	Next, consider the Minkowski products
	\[
		S(\ell_1) \bigcdot \C{F}(n-\ell_1),\, S(\ell_2;\ell_1)\bigcdot \C{F}(n-\ell_2),\, \dots,\, S(\ell_k; \ell_1,\ell_2,\dots,\ell_{k-1}) \bigcdot \C{F}(n-\ell_k).
	\]
	These subsets of $\C{F}(n)$ are again disjoint by definition; let $R'$ denote their union. Note that $R'$ and $S(n; \ell_1, \ell_2, \dots, \ell_k)$ are disjoint. Let $R = R' \cup  S(n; \ell_1, \ell_2, \dots, \ell_k)$; it is easy to see that the density of $R$ in $\C{F}(n)$ is
	\[d(\ell_1) + d(\ell_2;\ell_1) + \dots + d(\ell_k; \ell_1,\ell_2,\dots,\ell_{k-1}) + d(n; \ell_1, \ell_2, \dots, \ell_k)\]
	and that this quantity is therefore at most $1$.

	To finish the proof, it suffices to show that
	\[L' \cup S(n) = L \subset R = R' \cup S(n; \ell_1, \ell_2, \dots, \ell_k).\]
	It is easy to see that $L' \subset R'$. Therefore, it is sufficient to show that $S(n)$ is a subset of $R' \cup S(n; \ell_1, \ell_2, \dots, \ell_k)$. To see this, note that any word from $S(n)$ which has a prefix in $S(\ell_1) \cup S(\ell_2) \cup \dots \cup S(\ell_k)$ is also contained in $R'$. In other words, $S(n) \setminus S(n; \ell_1, \ell_2, \dots, \ell_k) \subset R'$; the result follows.
\end{proof}

With the above observation in hand, we are now ready to prove Theorem~\ref{main-res}.
\begin{proof}[Proof of Theorem~\ref{main-res}]
	We prove by contradiction that the upper Banach density of a product-free set is at most $1/2$.

	Suppose that $S \subset \C{F}$ is product-free and that $d^*(S) > 1/2 + \eps$ for some $\eps > 0$. We then claim that we may find an increasing sequence of positive integers $(\ell_k)_{k \in \N}$ such that
	\[ d(\ell_1) + d(\ell_2; \ell_1) + \dots + d(\ell_k; \ell_1, \ell_2, \dots, \ell_{k-1}) \ge \frac{1}{2} + \frac{1}{4} + \dots + \frac{1}{2^k} = 1 - \frac{1}{2^k}\]
	for each $k \in \N$.

	We construct this sequence inductively. Since $d^*(S) > 1/2$, it is clear that we may find $\ell_1 \in \N$ such that $d(\ell_1) \ge 1/2$. Having found $\ell_1< \ell_2< \dots< \ell_k$ as required, we choose $\ell_{k+1}$ as follows. Since $d^*(S) > 1/2 + \eps$, there exist arbitrarily long intervals $I \subset \N$ that satisfy
	\[
		\frac{\sum_{n \in I}d(n)}{|I|} > \frac{1}{2} + \eps.
	\]
	Choose such an interval $I$ whose length is sufficiently larger than $\ell_k$; we may assume, by passing to a sub-interval if necessary, that $\min I > \ell_k$. We claim that it is possible to choose $\ell_{k+1}$ from $I$; in other words, we claim that there exists an $n \in I$ such that
	\begin{multline*}
		d(\ell_1) + d(\ell_2; \ell_1) + \dots + d(\ell_k; \ell_1, \ell_2, \dots, \ell_{k-1}) + d(n; \ell_1, \ell_2, \dots, \ell_{k}) \ge 1 - \frac{1}{2^{k+1}}.
	\end{multline*}

	We prove this claim by contradiction. Suppose that there is no such $n \in I$. Then, by Proposition~\ref{doublecount}, we have
	\begin{align*}
		         & d(\ell_1)d(n-\ell_1) + d(\ell_2;\ell_1)d(n-\ell_2) + \dots + d(\ell_k; \ell_1,\ell_2,\dots,\ell_{k-1})d(n-\ell_k) + d(n)                     \\
		\le \,\, & d(\ell_1) + d(\ell_2;\ell_1) + \dots + d(\ell_k; \ell_1,\ell_2,\dots,\ell_{k-1}) + d(n; \ell_1, \ell_2, \dots, \ell_k) < 1-\frac{1}{2^{k+1}}
	\end{align*}
	for each $n \in I$. By summing the above inequality over all $n \in I$, we get
	\[
		\sum_{n \in I'}d(n)\left(1+d(\ell_1)+  d(\ell_2;\ell_1) + \dots + d(\ell_k; \ell_1,\ell_2,\dots,\ell_{k-1})\right) < |I|\left( 1 -\frac{1}{2^{k+1}}\right),
	\]
	where $I'\subset I$ is the set of $n \in I$ with $n + \ell_k < \max I$. This implies, by the inductive hypothesis, that
	\[\sum_{n \in I'}d(n)\left(2 - \frac{1}{2^k}\right) < |I|\left(1-\frac{1}{2^{k+1}}\right),\]
	or equivalently, $\sum_{n \in I'}d(n) < |I|/2$. Therefore, we have
	\[ \sum_{n \in I}d(n) \le \sum_{n \in I'}d(n) + \ell_k + 1 < \frac{|I|}{2} + \ell_k + 1,\]
	which contradicts the fact that $\sum_{n \in I}d(n) > |I|/2 + \eps|I|$, provided $|I| > (\ell_k+1)/\eps$.

	We now finish the proof of the proposition by showing that the existence of this sequence $(\ell_k)_{k \in \N}$ contradicts our initial assumption that $d^*(S) > 1/2 + \eps$. Fix a $k\in \N$ large enough to ensure that
	\[\frac{2^k}{2^{k+1} - 1} < \frac{1 + \eps}{2}\]
	and consider any interval $I \subset \N$ with $|I| > 4(\ell_k + 1) / \eps$. We know from Proposition~\ref{doublecount} that
	\[ d(\ell_1)d(n-\ell_1) + d(\ell_2;\ell_1)d(n-\ell_2) + \dots + d(\ell_k; \ell_1,\ell_2,\dots,\ell_{k-1})d(n-\ell_k) + d(n) \le 1\]
	for each $n \in \N$ with $n > \ell_k$; summing this inequality over such $n \in I$, we get
	\[\sum_{n \in I'} d(n)\left(2 - \frac{1}{2^k}\right) \le |I|, \]
	where $I'$ is the set of $n \in I$ with $n > \ell_k$ and $n + \ell_k < \max I$. Therefore,
	\[ \frac{\sum_{n \in I} d(n)}{|I|} \le \frac{2^k}{(2^{k+1} - 1)} + \frac{2(\ell_k + 1)}{|I|} < \frac{1}{2} + \eps, \]
	which is a contradiction; this proves the claimed upper bound in Theorem~\ref{main-res}.
\end{proof}

\section{Conclusion}\label{sec-conc}

A common line of enquiry in the study of product-free sets is to ask for `asymmetric' versions of results bounding the upper density of product-free sets. In this spirit, it is natural to ask whether an analogue of Theorem~\ref{main-res} continues to hold when one wishes to solve the equation $x \bigcdot y = z$ with $x$, $y$ and $z$ in specified subsets of $\C{F}$. More precisely, if $X, Y, Z \subset \C{F}$ are such that there are no solutions to $x \bigcdot y = z$ with $x \in X$, $y \in Y$ and $z\in Z$, one might ask if one of $X$, $Y$ or $Z$ has an upper asymptotic density of at most $1/2$. However, it is not hard to construct for any $\eps>0$, three sets $X, Y, Z \subset \C{F}$, each of upper asymptotic density at least $\phi - \eps$, where $\phi = (\sqrt 5 - 1)/2$, such that there are no solutions to $x \bigcdot y = z$ with $x \in X$, $y \in Y$ and $z\in Z$. Indeed, pick a suitably large $n \in\N$ and choose any set $W \subset \C{F}(n)$ such that $||W|/|\C{F}(n)| - \phi| < \eps/3$. Now take $X$ to be the set of all words with a prefix in $W$, $Y$ to be the set of all words with a suffix in $W$, and $Z$ to be the set $\C{F} \setminus (X \bigcdot Y)$. Clearly, there are no solutions to $x \bigcdot y = z$ with $x \in X$, $y \in Y$ and $z\in Z$; it is also not hard to check that each of $X$, $Y$ and $Z$ has an upper asymptotic density at least $\phi - \eps$.

Next, it would be interesting to understand what product-free sets of maximal density look like. As we saw earlier, several non-isomorphic extremal constructions are furnished by the family of odd-occurrence sets. We suspect that these might be the only constructions of maximal density, and make the following conjecture.
\begin{conjecture}\label{unique}
	Let $\Alphabet$ be a finite set. If $S \subset \C{F}_\Alphabet$ is product-free and $d^*(S) = 1/2$, then $S \subset \C{O}_\Gamma$ for some nonempty subset $\Gamma \subset \Alphabet$.
\end{conjecture}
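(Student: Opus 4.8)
The plan is to show that extremality forces $S$ to respect a $\Z/2$-grading of $\C{F}$. Observe first that the odd-occurrence sets are exactly the sets of the form $\phi^{-1}(1)$, where $\phi \colon \C{F} \to \Z/2$ is a semigroup homomorphism into the group $(\Z/2,+)$: such a homomorphism is determined by its values on the single letters, and setting $\Gamma = \{a \in \Alphabet : \phi(a) = 1\}$ recovers $\C{O}_\Gamma = \phi^{-1}(1)$. It therefore suffices to produce a homomorphism $\phi \colon \C{F} \to \Z/2$ with $S \subseteq \phi^{-1}(1)$ (the case $S = \emptyset$ being trivial, and a nonempty $S$ forcing $\Gamma \ne \emptyset$ since then $\phi$ is not identically zero). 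The entire task is thus to manufacture this grading out of the combinatorial constraints on $S$.

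The engine will be a stability analysis of Proposition~\ref{doublecount}. Writing $\delta_i = d(\ell_i; \ell_1, \dots, \ell_{i-1})$ for the incremental densities, the gap between the two sides of the proposition equals $\sum_i \delta_i\bigl(1 - d(n-\ell_i)\bigr) - s$, where $s$ is the density of those words in $S(n)$ that have a prefix in $S(\ell_1) \cup \dots \cup S(\ell_k)$; this gap is nonnegative precisely because $L \subseteq R$ in the proof of the proposition, and a direct check shows it vanishes identically on an odd-occurrence set. I would first show that it must vanish asymptotically for our extremal $S$: since $d^*(S) = 1/2$ is the maximum permitted by Theorem~\ref{main-res}, one can, as in the proof of the theorem, build scales $\ell_1 < \ell_2 < \dots$ along Banach-dense intervals whose average density tends to $1/2$ and force $\sum_i \delta_i \to 1$, driving the gap to zero. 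The payoff is a \emph{flip property}: for (almost) every word $w$ whose first prefix in $\bigcup_i S(\ell_i)$ is some $p \in S$, writing $w = p \bigcdot q$, exactly one of $q$ and $p \bigcdot q$ lies in $S$. One direction is immediate from product-freeness (if $q \in S$ then $p \bigcdot q \notin S$), and the other is exactly what a vanishing gap supplies.

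Granting the flip property for \emph{all} $p \in S$ and \emph{all} suffixes $q$, the grading is easy to read off: fixing any $w_0 \in S$, set $\phi(u) = 1 \oplus \mathbf{1}_S(w_0 \bigcdot u)$. Product-freeness gives $\phi(u) = 1$ for $u \in S$, and the flip relations — applied to $w_0$, to $w_0 \bigcdot u$, and to the various products that arise — should verify that $\phi$ is a homomorphism into $\Z/2$ (one checks this reduces to $\mathbf{1}_S(w_0 \bigcdot u \bigcdot v) = 1 \oplus \mathbf{1}_S(w_0 \bigcdot u) \oplus \mathbf{1}_S(w_0 \bigcdot v)$). Restricting $\phi$ to $\Alphabet$ then produces $\Gamma$ and completes the argument.

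The hard part will be upgrading the flip property from its asymptotic, almost-everywhere form to the exact, global statement needed in the third step. The density machinery only controls long intervals and density-one sets of suffixes, whereas the conclusion $S \subseteq \C{O}_\Gamma$ is absolutely rigid: a single even word in $S$ would violate it. I expect this to require two further ingredients: exploiting the freedom to place the scales $\ell_i$ anywhere in a Banach-dense set, so as to test every word as a prefix and every word as a suffix, and a self-improvement step in which an approximate grading, once in hand, lets product-freeness forbid the remaining exceptions outright (and rules out inconsistent, scale-dependent choices of $\Gamma$). A secondary technical obstacle is that the extremal case $d^*(S) = 1/2$ sits exactly at the boundary where the scale-construction in Theorem~\ref{main-res} degenerates — that argument genuinely used $d^*(S) > 1/2 + \eps$ — so the whole analysis must be run through a limit of near-extremal configurations.
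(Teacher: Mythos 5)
You should first be aware that the statement you are proving is Conjecture~\ref{unique}: the paper offers no proof of it, and indeed the authors moved it out of the results section precisely because they could not prove it. So there is no argument of theirs to compare yours against; the only question is whether your proposal stands on its own as a proof. It does not — it is a research program, and beyond the gaps you acknowledge, one of its pivotal steps is not merely unproved but false as stated.

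The fatal point is the ``exact, global flip property'' that your third step grants. Extremal sets need not satisfy it: take $S = \C{O}_\Gamma \setminus \{w\}$ for a single word $w \in \C{O}_\Gamma$ (say $\Gamma = \Alphabet$ and $|w| \ge 3$ odd). This $S$ is product-free, has $d^*(S) = 1/2$, and satisfies the conjecture's conclusion, yet the flip property fails: pick a proper prefix $p$ of $w$ of odd length, so that $p \in S$, and let $q$ be the complementary suffix, so that $w = p \bigcdot q$ with $q$ of even length; then neither $q$ nor $p \bigcdot q = w$ lies in $S$. Since upper Banach density is insensitive to deleting a density-zero set, no density argument can ever upgrade the flip property from ``almost everywhere'' to ``everywhere''; your hoped-for self-improvement step must therefore tolerate exceptions permanently rather than eliminate them. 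This breaks the construction of the grading: with $S$ as above, $\phi(u) = 1 \oplus \mathbf{1}_S(w_0 \bigcdot u)$ disagrees with the true parity at the word $u$ solving $w_0 \bigcdot u = w$, and different choices of $w_0$ yield different functions $\phi$, so $\phi$ is neither well-defined as a grading nor a homomorphism. Two further gaps: the flip property, even where it holds, constrains only words having a prefix in $S$, while your homomorphism check requires relations for products $u \bigcdot v$ with $u \notin S$, which do not follow; and the stability input itself is missing, since at $d^*(S) = 1/2$ exactly the scale construction in the proof of Theorem~\ref{main-res} does not run (it genuinely needs the slack $\eps > 0$), and ``run through a limit of near-extremal configurations'' names the difficulty rather than resolving it — there is no compactness in sight. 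What does survive from your outline is the correct observation that $\C{O}_\Gamma = \phi^{-1}(1)$ for a semigroup homomorphism $\phi\colon \C{F} \to \Z/2$, together with a plausible endgame (if $S$ were known to be contained in some $\C{O}_\Gamma$ up to a density-zero exceptional set, then an even word $w \in S$ forces $S$ to avoid $w \bigcdot (S \cap \C{O}_\Gamma)$, a genuine density loss); but the core step — extracting even an approximate grading from extremality — is exactly what remains open.
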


Finally, another natural direction is to study product-free subsets of the \emph{free group $\mathbf{F}_\Alphabet$} over a finite alphabet $\Alphabet$. Similarly to the situation in this paper, the most natural measure to consider in the case of the free group $\mathbf{F}_\Alphabet$ would be the one that assigns a weight of $|\Alphabet|(|\Alphabet|-1)^{-(n-1)}$ to each irreducible word of length $n$. The different notions of density defined here for the free semigroup then have analogous definitions in the free group, and we believe that an analogue of Theorem~\ref{main-res} should hold in the free group as well; concretely, we conjecture the following.

\begin{conjecture}\label{freegrp}
	For any finite alphabet $\Alphabet$, no product-free subset of the free group $\mathbf{F}_\Alphabet$ has upper Banach density exceeding $1/2$.
\end{conjecture}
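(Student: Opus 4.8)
The plan is to transport the combinatorial machinery of Section~\ref{sec-proof} to the Cayley graph of $\mathbf{F}_\Alphabet$, which is the $|\Alphabet|$-regular tree (with $|\Alphabet|$ now counting each generator together with its inverse, as is forced by the stated weight $|\Alphabet|(|\Alphabet|-1)^{-(n-1)}$). That weight is precisely the self-similar cylinder measure on this tree: for a reduced word $x$ of length $\ell$, the cylinder $[x]$ of length-$n$ reduced words having $x$ as a prefix has total weight equal to the weight of $x$ itself, exactly as in the free semigroup. Consequently the layers again carry equal mass, the densities $d_S(n)$ and $d_S(n;\ell_1,\dots,\ell_k)$ make sense verbatim, and the \enquote{cylinder side} of Proposition~\ref{doublecount} — the assertion that the prefix-cylinders over $S(\ell_1),\,S(\ell_2;\ell_1),\dots$ together with $S(n;\ell_1,\dots,\ell_k)$ are disjoint and fill at most the whole layer — carries over unchanged, being a purely tree-theoretic statement about prefixes. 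The hope is then to reuse the inductive construction of $(\ell_k)$ and the interval-based telescoping $1-2^{-k}$ argument from the proof of Theorem~\ref{main-res} as a black box, once the right analogue of Proposition~\ref{doublecount} is in hand.

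The one genuinely new phenomenon is that the product-free hypothesis constrains the \emph{group} product, which differs from concatenation by cancellation. I would stratify the products $x \bigcdot y$ by the number $c$ of cancelled letters, so that $|x \bigcdot y| = |x| + |y| - 2c$. The stratum $c = 0$ behaves exactly as in the free semigroup: for $x \in S(\ell_i;\dots)$ and $y \in S(n-\ell_i)$ whose boundary letters do not cancel, the map $(x,y) \mapsto x\bigcdot y$ is injective, its image lies in the cylinder over $S(\ell_i;\dots)$, and product-freeness forces this image to avoid $S(n)$. This already yields a valid inequality of the shape of Proposition~\ref{doublecount}, but with each term $d(\ell_i;\dots)\,d(n-\ell_i)$ replaced by the smaller measure of its non-cancelling part.

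The main obstacle is to recover the mass discarded by cancellation, and here there are two layers of difficulty. First, the non-cancelling fraction of $S(\ell_i;\dots) \bigcdot S(n-\ell_i)$ is governed by the correlation between the distribution of \emph{last} letters across $S(\ell_i;\dots)$ and of \emph{first} letters across $S(n-\ell_i)$; a product-free set is free to align these adversarially (concentrating short words ending in $s$ against long words beginning with $s^{-1}$) so as to shrink the non-cancelling image and weaken the inequality. Second, and more seriously, once $c \ge 1$ the product map is wildly non-injective — roughly $(|\Alphabet|-1)^{c}$ distinct pairs can share a single image of length $n-2c$ — so the cancelled mass pushed down to shorter layers cannot be read off as a clean subset of those layers in the way the $c=0$ mass can. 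I expect this non-injectivity to be the crux.

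The line of attack I would pursue is to sum the constraints over an entire interval $I$ of layers, exactly as in the proof of Theorem~\ref{main-res}, so that the downward shifts by $2c$ keep products inside $I$ up to boundary terms of size $O(\ell_k)$; and then to neutralise the adversarial first-letter alignment by averaging over the finite group of automorphisms of $\mathbf{F}_\Alphabet$ permuting the generators, which preserves the measure, the length function and the property of being product-free, thereby replacing the biased boundary-letter distributions by uniform ones at the cost of a controlled factor of order $1/|\Alphabet|$ per product. The remaining task — and the part I am least certain survives — is to establish a conservation statement asserting that, after this symmetrisation, the total product mass is accounted for across the interval up to these lower-order corrections; at that point it should feed into the existing $\ell_k$-construction and the telescoping argument to give $d^*(S) \le 1/2$.
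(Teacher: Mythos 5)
This statement is Conjecture~\ref{freegrp}: the paper poses it as an open problem and offers no proof of it — indeed the paper closes by remarking that the proof of Theorem~\ref{main-res} relies crucially on the unique factorisation of a word of length $m+n$ into a prefix of length $m$ and a suffix of length $n$, that this property is lost in $\mathbf{F}_\Alphabet$, and that new ideas seem to be required. So there is no proof in the paper to compare yours against; the only question is whether your proposal closes the gap, and it does not. As you yourself concede, the \enquote{conservation statement} that would account for the mass lost to cancellation is exactly the missing ingredient, and everything else in your outline (the cylinder measure, the $c=0$ stratum inequality, the interval summation with $O(\ell_k)$ boundary terms) is the routine part that transfers from the semigroup setting essentially unchanged.

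Moreover, one of the two repair mechanisms you propose is provably vacuous. Averaging over automorphisms $\sigma$ of $\mathbf{F}_\Alphabet$ that permute the generators cannot \enquote{replace the biased boundary-letter distributions by uniform ones}, because cancellation is equivariant under such automorphisms: $x$ ends in $s$ and $y$ begins with $s^{-1}$ if and only if $\sigma(x)$ ends in $\sigma(s)$ and $\sigma(y)$ begins with $\sigma(s)^{-1}$. Hence the cancellation pattern of $\sigma(S) \bigcdot \sigma(S)$ is identical to that of $S \bigcdot S$, each $\sigma(S)$ has the same layer densities $d_{\sigma(S)}(n) = d_S(n)$ as $S$, and each satisfies exactly the same system of product-freeness constraints; symmetrisation therefore yields no new inequality whatsoever, and the adversarial alignment you identify (short words ending in $s$ paired against long words beginning with $s^{-1}$) survives it intact. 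That leaves the $c \ge 1$ strata, where, as you note, the product map has multiplicity growing like $(|\Alphabet|-1)^c$, and nothing in your outline controls this. The honest summary is that your proposal reproduces the paper's own diagnosis of the obstacle — cancellation destroys the injectivity that drives Proposition~\ref{doublecount} — but does not overcome it; the conjecture remains open.
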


Note that, in the proof of Theorem~\ref{main-res}, we rely crucially on the fact that there is exactly one way to write a word of length $m+n$ as the concatenation of a word of length $m$ with a word of length $n$; of course, we lose this property when working with free groups, so we believe that some new ideas will be required to understand product-free sets in free groups.

\section*{Acknowledgements}
The second author would like to acknowledge the support of Dr.\ Max R\"ossler, the Walter Haefner Foundation, and the ETH Zurich Foundation.
The third author wishes to acknowledge support from NSF grant DMS-1800521.
\bibliographystyle{amsplain}
\bibliography{prodfree_semigroup}

\end{document}